\newtheorem{theorem}{Theorem}[section]
\newcommand{\askip}{\htab\htab {\rm and} \htab\htab}
\newcommand{\emptysett}{\mbox{\O}}
\begin{document}

\newcommand{\Ito}{It\^{o} }
\newcommand{\bz}{{\bf z}}
\newcommand{\vv}{{\bf v}}
\newcommand{\ww}{{\bf w}}
\newcommand{\yy}{{\bf y}}
\newcommand{\xx}{{\bf x}}
\newcommand{\nn}{{\bf n}}
\newcommand{\uu}{{\bf u}}
\newcommand{\mm}{{\bf m}}
\newcommand{\qq}{{\bf q}}
\newcommand{\aba}{{\bf a}}
\newcommand{\pp}{{\bf p}}
\newcommand{\OO}{\mathbb{O}}
\newcommand{\II}{\mathbb{I}}
\newcommand{\IR}{\mathbb{R}}
\newcommand{\IC}{\mathbb{C}}
\newcommand{\IB}{\mathbb{B}}
\newcommand{\IZ}{\mathbb{Z}}
\newcommand{\half}{\frac{1}{2}}
\newcommand{\halff}{1/2}
\newcommand{\bea}{\begin{eqnarray*}}
\newcommand{\eea}{\end{eqnarray*}}
\newcommand{\beaq}{\begin{eqnarray}}
\newcommand{\eeaq}{\end{eqnarray}}
\newcommand{\bfalpha}{\mbox{\boldmath $\alpha$ \unboldmath} \hskip -0.05 true in}
\newcommand{\bfgamma}{\mbox{\boldmath $\gamma$ \unboldmath} \hskip -0.05 true in}
\newcommand{\bfmu}{\mbox{\boldmath $\mu$ \unboldmath} \hskip -0.05 true in}
\newcommand{\bfnu}{\mbox{\boldmath $\nu$ \unboldmath} \hskip -0.05 true in}
\newcommand{\bfxi}{\mbox{\boldmath $\xi$ \unboldmath} \hskip -0.05 true in}
\newcommand{\bfphi}{\mbox{\boldmath $\phi$ \unboldmath} \hskip -0.05 true in}
\newcommand{\bfpi}{\mbox{\boldmath $\pi$ \unboldmath} \hskip -0.05 true in}
\newcommand{\beq}{\begin{equation}}
\newcommand{\eeq}{\end{equation}}
\newcommand{\bfomega}{\mbox{\boldmath $\omega$ \unboldmath} \hskip -0.05 true in}
\newcommand{\bfeta}{\mbox{\boldmath $\eta$ \unboldmath} \hskip -0.05 true in}
\newcommand{\bfepsilon}{\mbox{\boldmath $\epsilon$ \unboldmath} \hskip -0.05 true in}
\newcommand{\bftheta}{\mbox{\boldmath $\theta$ \unboldmath} \hskip -0.05 true in}
\newcommand{\om}{\omega}
\newcommand{\Om}{\Omega}
\newcommand{\bom}{\bfomega}
\newcommand{\III}{\rm III}
\def\tab{ {\hskip 0.15 true in} }
\def\vtab{ {\vskip 0.1 true in} }
 \def\htab{ {\hskip 0.1 true in} }
 \def\ntab{ {\hskip -0.1 true in} }
 \def\vtabb{ {\vskip 0.0 true in} }

\begin{center}
{\bf \LARGE Entropy, Symmetry, and the Difficulty of Self-Replication}
\end{center}

\begin{center}
Gregory S. Chirikjian \\
National University of Singapore \\
7 February 2022
\end{center}

\begin{abstract}
The defining property of an artificial physical self-replicating system, such as a self-replicating robot,
is that it has the ability to make copies of itself from basic parts. Three questions that immediately arises in
the study of such systems are:
1) How complex is the whole robot in comparison to each basic part ?
2) How disordered can the parts be while having the robot successfully replicate ?
3) What design principles can enable complex self-replicating systems to function in disordered environments generation after generation ?
Consequently, much of this article focuses on exploring different concepts of entropy as a measure of disorder,
and how symmetries can help in reliable self replication, both at the level of assembly (by reducing the number of wrong ways that parts could be assembled), and also as a parity check when replicas manufacture parts generation after generation. The mathematics underpinning these principles that quantify artificial physical self-replicating systems are articulated here by integrating ideas from
information theory, statistical mechanics, ergodic theory, group theory, and integral geometry.
\end{abstract}

\section{Introduction}

Artificial self-replicating systems have the potential to greatly enhance mankind's ability to
harvest resources in outer space. The basic idea is that a functional factory consisting of robots,
materials processing units, and manufacturing apparatuses are formulated to harvest materials and to build
infrastructure from resources found in situ before sending humans into unstructured extraterrestrial environments. This concept is by no means new, and the history can be found
in many prior technical works including \cite{f1,f2,f3,c1,l1,l2,moses2020}.
The particular presentation in this paper is an embellishment of the author's older works \cite{pk_case,pk_dss}.

The basic difficulty is that in order to close the self-replicating loop, there must be a balance between
the overall system complexity and the ability of the system to handle the simplest inputs possible. Generally
speaking, the more complex the system is, the more capable it is to assemble the simplest parts, or even harvest
raw materials. But when the system is very complex, it then requires more to reproduce. For example, if a robot needs microprocessors and a 3D printing system needs a high-energy laser, then producing those from in situ resources becomes an enormous challenge. One way around that problem is to focus on production of mechanical parts in situ such as structural members for robots, factories, and habitats, and to send the relatively light-weight ``vitamin'' components such as computers, lasers, and some chemical reagents to be considered as
inputs to the system rather than as items to be replicated. In this way, a practical version of
in situ resource utilization (ISRU) can be achieved by reclassifying inputs. Moreover, developing systems that are dependent on some inputs that the systems cannot manufacture from scratch is both more akin to living systems that require specific nutrients, and is also safer than developing self-replicating systems that could continue to replicate without the imposition of resource bottlnecks. Without such intentional bottlenecks there could be fears of things running amuck for generations.

The emphasis of this article is not to produce or review any particular system architecture. This has already
been done elsewhere recently \cite{moses2020}. Instead, this article seeks to enumerate mathematical modeling tools that may be useful in assessing any self-replicating system. These tools can be applied to artificial and biological systems. In particular, concepts of entropy and complexity that enter in many fields can be used here. Entropy as a measure of disorder appears in two different forms in classical thermodynamics - as discrete sums over microstates and as integrals that approximate. In classical and quantum probability and in information theory, entropy plays a prominent role. The various concepts of entropy are relevant to artificial self-replicating systems because of the interplay between the physical and informational. (It can be argued that
information is physical, but there are some aspects of information that are independent of its particular physical implementation.)

The {\it degree of self replication} is a concept to measure how complex a self-replicating system is \cite{l2}.
Degree of self replication is additive for modular parts but can be super-additive if synergy results from adding passive components to make machines. For example, assembling gears to form a transmission gives the resulting assemblage a functionality that transcends counting the number of parts that constitute it. There are many ways to define system complexity that can be used, such as algorithmic complexity theory, and these
concepts are reviewed as well. (In the example of assembling a transmission, the number of steps required to do the assembly might be counted instead of the number of input parts.)

The theory of artificial self-replicating automata has a long history, and elements were touched on by
Turing, Von Neumann, Shannon, Liang, Arbib, and Freitas, as reviewed in \cite{moses2020}. Self replication
is also related to the theory of complexity as articulated in \cite{Zurek, Pincus, Teix, Zur2,Zurek2}, the thermodynamics
of computation \cite{Benn, Sz}, and physical aspects of information theory \cite{Br, Land1, Land2}.
Many of the landmark papers in this area have been collected in the books \cite{st_leff1,st_leff2,Zurek1}.

Most work in the area of artificial self-replicating systems is in the form of artificial life in silico. The challenges in a-life are somewhat different than those involved in the design of systems that must replicate in a material sense in the 3D physical world governed by mechanics and thermodynamics. This is the emphasis of the current work. Other theoretical aspects related to the foundations of self-replicating machines can be found in
\cite{menezes,sayama}.

The remainder of this paper is structured as follows. Section \ref{degreesec} reviews the concept of
`degree of self replication.' Section \ref{entropysec} explains why the concept of entropy is appropriate
to describe the robustness of a self-replicating system in the presence of disorder, and quantitatively compares
different concepts of entropy from different fields. Section \ref{symmetry} explains how symmetry, and the mathematical discipline of group theory, can be used to describe how design considerations can make the process of self replication easier. Section \ref{kinematic} explains how a formula from the field of integral geomety,
the principal kinematic formula, can be used to describe how constraining the motion of parts reduces entropy to make assembly easier. Section \ref{error} discusses how errors propagate over generations of self replication and
how symmetry can be used as an error correction mechanism.

\section{Degree of Self Replication} \label{degreesec}

A question that arises naturally in the context of artificial self-replicating systems is ``In comparison to a biological systems, is the artificial system really self-replicating ?''. In other words, a biological system such as a bacterium takes in nutrients and produces a copy of itself. The issue at hand is that the nutrients appear to be invisible to the naked eye and then the replica pops into existence, as if by magic. However, in fact the nutrients are themselves complex molecules such as amino acids, nucleic acids, lipids, sugars, and even trace metals such as iron, sodium, and potassium. And of course this all happens in the presence of abundant water, which is another resource. Without these essential ingredients a biological system would not replicate, since biological replication is essentially a biomolecular process.

In an artificial self-replicating system, if the number of input parts is small, and if each part is itself a complex machine, then the effort involved in the assembly step of self-replication is relatively low compared to the effort of making the input parts. The assembly task in self-replication is then relatively unchallenging in such a scenario. In contrast, if there are many simple (easy to manufacture) parts that need to be assembled by the robot, the relative difficulty is higher. The difference between these two scenarios can be quantified using the concept of the `degree of self replication'. The highest degree of self replication is when raw materials are harvested by a system and used to produce a replica, since fabrication and assembly are both done by the self-replicating system in that scenario.

That is, an artificial system that picks and places a few complex macroscopic modular units and assembles them into a functional copy seems to be far less impressive than its biological counterparts. But it has been argued previously by the author and coworkers that this is a difference in degree rather than type \cite{l2}. That is, a biological replicator has a high degree of self replication whereas current robotic self-replicating systems have a low degree. This can be quantified quite simply. If a system is modular, each module can be assigned a complexity value by counting its constituent parts (or the number of steps required to assemble the parts), and the ratio of complexity of the overall system to the complexity of each  constituent part can be computed. For example, the complexity of a bacterium may consist of millions of individual
amino acids, lipids, and nucleotide bases. If these are considered as the fundamental building blocks, then the degree of self replication might be millions.

In contrast, an artificial self replicating system consisting of modules composed of motors, microprocessors, and sensors, each of which was built in a large factory, and meticulously assembled by a skilled worker before the
robot does a simple pick and place operation seems to pale in comparison. This is reflected in the degree of self replication
\begin{equation}
DOSR \,=\, \frac{System \,\, Complexity}{Part \,\, Complexity}
\label{dosr}
\end{equation}
For the biological system, this might be millions. For the artificial physical replicators to date
this might be tens or hundreds. Therefore there are several orders of magnitude difference reflected
in this simple concept. Eventually, if an artificial replicator could scoop up sand and other raw materials,
and cast parts in molds or 3D print, the degree of self replication for artificial systems could rival those
of biological replicators.

Of course there are subtleties regarding how to measure complexities in the simple formula above. For example, in an amino acid one could count atomic nuclei. But for glycine, there are far fewer atoms than in the amino acids lycene or tryptophan. Then there issues related to metabolism and synergy. Assembly of amino acids to form a polypeptide in a ribosome requires metabolic energy, and perhaps that is a better measure of the effort involved than the geometric arrangement of atomic nuclei since nature produces amino acids from nonbiological processes for use in biological systems. In contrast, in an artificial
self-replicating system, any modular component containing a microprocessor represents a very high concentration
of complexity (as measured either in terms of features, or manufacturing effort, or effort to build the microprocessor factory) prior to the assembly process, thereby limiting the degree of self replication.

When there is a wide range of input part complexities, it makes sense to take the ratio of the overall system complexity to the {\it most complicated} input part rather than an average complexity. Again, the issue of how to measure complexity is a whole other issue. Several options are: 1) to count the number of solid nonarticulated parts that constitute an input object or subsystem, perhaps weighted by the geometric complexity of the part; 2) to count the number of operations or total effort/energy
required to produce an input object; 3) to use algorithmic complexity theory to count the length of the computer program used to produce the input parts (if they are produced by an automated fabrication system).
When measuring the complexity of an assemblage of parts, there are also several ways. For example, one could
simply sum up the complexities of individual parts. But this may underestimate the overall system complexity resulting from synergetic interactions of parts and the effort involved in achieving this synergy. For example,
assembling gears to form a transmission might require precise operations requiring substantial effort, beyond counting the number of features of the part, such as gear teeth. And the overall functionality of the resulting assemblage may not be reflected by simply summing the complexity of the constituent parts.
Therefore it might be more appropriate to consider the algorithmic complexity of the computer programs used to
create parts and to assemble them. Or, one could count the total energy required to produce input parts from the
stage of raw materials, and to then count the energy required to assemble the parts.
Regardless of how complexity is defined, (\ref{dosr}) can be used to measure the degree of self replication in the sense defined by each concept of complexity.

\section{Entropy} \label{entropysec}

Another dimension to the difficulty of self replication is how ordered the input parts are. If they are arranged precisely in an array and the self-replicating system need only pick and place, this is less challenging than if the robot must identify, manipulate, and assemble randomly oriented parts. Moreover, if the parts have symmetries, this reduces the number of wrong ways that they can be assembled. This aspect of the difficulty of self replication can be quantified by computing entropy. Many different concepts of entropy exist from statistical mechanics, information theory, and quantum probability. Since a self replicating system
intertwines physicality and information processing, concepts from these different fields become relevant.
For example, an unordered array of manufactured parts in a bowl can be described using the configurational
entropy of parts. The entropy of a mixture of raw materials that must be separated, refined, and then used to produce finished parts is more related to thermodynamic and ergodic-theoretic entropy.

During a self-replication process, parts are assembled to create the replica. If $N$ parts can exist in many different initial positions and orientations, then the set of all possible initial arrangements can be described as a probability density. When the parts are assembled properly, the disorder will decrease. Entropy is a measure of how disordered the initial and final states are.

Many different concepts of entropy exist. Two broad categories of entropy concepts are
continuous and discrete. The distinction between the two can be blurred by realizing that when
the problems at hand have a natural length scale, features smaller than a particular limit are not measurable.
Therefore whether one creates bins to discretize at this scale, or calls the problem continuous does not matter
because entropy differences are essentially the same whether computed using continuous or discrete formulae.

Aside from the continuous/discrete characterization, there are distinct concepts of entropy in several different fields including information theory, statistical mechanics, ergodic/measure theory, and quantum probability. These various concepts are reviewed here with an eye towards their relevance to artificial self-replicating systems.

The various concepts of entropy examined herein share a number of common features:
Entropy is reduced by environmental obstacles and containment, as well as fences, and by considering symmetry.
In all cases conditioning reduces entropy. Continuous and discrete entropy differences can be equated if bin size is below feature resolution.

Both concepts (entropy and degree of self replication) are influenced by symmetry.
Symmetry reduces the entropy difference between an initial disordered ensemble and the final assemblage, making assembly tasks easier. Coset and double coset space decompositions and associated concepts of fundamental domains quantify this idea. When there order of operations is not critical this also makes assembly tasks easier.

The remainder of this section is structured as follows. In Subsection \ref{discretequantum} the basic defintions
and concepts of discrete and quantum entropy are reviewed. In Subsection \ref{continuousentropy} continuous entropy on a measurable space is defined, and its relationship to discrete entropy is explained. In Subsection
\ref{entdifsec} the properties of entropy differences and relative entropy are reviewed.

\subsection{Discrete and Quantum Entropy} \label{discretequantum}

An {\it event} is a discrete occurrence of some kind.
For example, it can be the appearance of a printed symbol on a page, a sequence of dots and dashes in a telegraph message, a string of binary numbers stored in a computer memory, the process of pouring a discrete quantum of molten material into a mold to form a mechanical part, or the fastening of two mechanical parts in a mechanized assembly procedure. The concept of an event is extremely general.

Let $E_i$ denote such an event and consider a finite collection of events $C = \{E_1, E_2, ..., E_n\}$.
Associated with each event assign a probability $p_i \doteq p(E_i) \geq 0$ and $\sum_{i=1}^{n} p_i = 1$.
Then the {\it self-information} of each such event is
\beq
{\cal I}(E_i) = - \log p(E_i).
\eeq
The base of the logarithm is largely irrelevant for our purposes.
The choice of base is essentially a choice of units, similar to how length can be measured in
feet or meters.

The concept of ${\cal I}(E_i)$ was developed when information fed into ticker tapes or punch cards were described by binary punches in paper, wherein ${\cal I}(E_i)$ describes the amount of tape or cards to encode a message, and this concept carried over to the digital computer. In such concepts it is therefore natural to use a logarithm of base 2, and to measure information in `bits'. In other contexts it is more natural to use base e, and measure information in `nats'.

The primary purpose of the development of information theory was to maximize the amount of information that could reliably be transmitted from one time and place to another. This involves the efficient packaging of information with codes that perform self checks to detect and correct errors. Information and its transmission has many manifestations, including the genetic code and DNA replication and transcription. In the context of self-replicating robotic systems, the spatial arrangment of parts and assembly operations required to produce a replica can be considered as messages that are transmitted. The design of physical means for error correction
while efficiently replicating are then analogous to codes that protect data transmission at the highest rates possible.

The {\it entropy} of the discrete collection $C$ is defined to be the average self-information of all events in $C$:
\beq
H(C) \doteq \sum_{i=1}^{n} p(E_i) \, {\cal I}(E_i) = - \sum_{i=1}^{n} p(E_i) \log p(E_i).
\eeq
Sometimes this is denoted as a functional of the probabilities instead of the underlying events. That is,
if $p =\{p_1,p_2,...,p_n\}$ is the collection of probabilities, then
\beq
S(p) \doteq -\sum_{i=1}^{n} p_i \, \log p_i \,.
\label{spdef}
\eeq
Obviously, $S(p) = H(C)$. The difference in notation only reflects a difference in emphasis regarding the meaning of entropy.

The above concepts have analogs in quantum probability. In the simplest case, if a diagonal matrix $P$ is defined with the values $\{p_i\}$ on its diagonal, then the (von Neumann) quantum entropy
\beq
S(P) \,\doteq\, - {\rm tr}\left(P \log P\right)
\eeq
Where $\log P$ is the matrix logarithm and the terms inside of the trace is the product of two matrices.
Whereas the above gives exactly the same result as the discrete entropy $S(p)$, it is more general because
$P$ can be taken to be any symmetric real matrix (or even a complex Hermitian one) with the condition
that ${\rm tr}(P)=1$. Hermitian matrices of this sort arise in quantum mechanics, which is the domain in which
von Neumann defined quantum entropy. So far, a connection between self-replicating systems and quantum entropy
has not been explored, and the concept is included here only for completeness.

\subsection{Entropy of Probability Density Functions on Continuous Measure Spaces} \label{continuousentropy}

A probability density function (pdf) on a measure space $(X,\mu)$
is a function $f:X \,\longrightarrow\, \IR_{\geq 0}$ such that
$$ \int_X f(\xx) \, d\mu(\xx) \,=\,1\,. $$
The entropy of $f$ is defined as
\beq
S(f) \,\doteq\, -\int_X f(\xx) \log f(\xx) \, d\mu(\xx) \,.
\label{contentr}
\eeq
This is the continuous analogue of (\ref{spdef}). Again,
the choice of base of the logarithm amounts to a choice of measurement units for $S$.
Throughout this section, $\log \doteq \log_e = \ln$.

In the case when $\qq$ parameterizes the group of rotations or the Euclidean motion group,
then the entropy of a pdf $f:G \rightarrow\,\IR_{\geq 0}$ is
\beq
S_G(f) \,\doteq\, -\int_{G} f(g) \log f(g) \,dg
\label{Gent}
\eeq
where the natural bi-invariant measure $dg$ takes on different appearances under changes
of parametrization but the value of the integral is independent of parametrization and it
is invariant under shifts of the form $g \rightarrow g_0 g$ and $g \rightarrow g g_0$.

More than 35 years ago, A. Sanderson quantified the concept of part disorder by defining the concept of ``parts entropy'' \cite{pk_partent}, which motivates the definition in (\ref{Gent}).

\subsubsection{Statistical Mechanics, Configurational Entropy, and Discretization} \label{statmech}

In statistical mechanics, the continuous measure space is $2n$-dimensional where $n$ is the number
of degrees of freedom of the physical system under consideration. This ``phase space''
has a volume element of the form $d\mu(\xx) = d{\bf p} d{\bf q}$ where ${\bf q}$ denotes a set of generalized
coordinates and ${\bf p}$ denotes the corresponding conjugate momenta. This form is invariant under changese
of coordinates. In statistical mechanics, entropy is computed from Gibbs' formula
\beq
S_B \,\doteq\, -k_B \int_{\qq} \int_{\pp} f(\pp,\qq,t) \log f(\pp,\qq,t) \, d\pp \, d\qq
\label{gibbs}
\eeq
where $\qq \in Q \subseteq \IR^n$ is a set of coordinates, and $\pp \in \IR^n$ are the corresponding conjugate momenta. The Lebesgue measure on the $2n$-dimensional phase space, $d\pp \, d\qq = dq_1 ... dq_n dp_1 ... dp_n$, is invariant under coordinate changes, though such changes do affect the bounds of integration $Q$, unless $Q = \IR^n$. The formula in (\ref{gibbs}) can be considered as a special case of (\ref{contentr}), to within the constant $k_B$.

For a statistical mechanical system at equilibrium the temporal dependence of $f$ vanishes and $f$ converges to
the Boltzmann distribution
\beq
f_{B}(\qq,\pp) \,\doteq\, \frac{1}{Z} \exp \left(-\beta {H}({\bf p},{\bf q}) \right)
\label{maxboltdist}
\eeq
where $\beta \doteq 1/k_B T$ with $k_B$ denoting Boltzmann's constant and $T$ is temperature measured in degrees Kelvin. Here the Hamiltonian of a mechanical system is defined as the total system energy written in terms the conjugate momenta $\pp$ and generalized coordinates $\qq$:
\beq
 H(\pp,\qq) \doteq \half \pp^T M^{-1}(\qq)\, \pp \, + \, V(\qq) \,.
\label{hamlsll4432}
\eeq
Here $M(\qq)$ is the configuration-dependent mass matrix and
$${\bf p} \,\doteq\, M(\qq) \dot{\qq} \,. $$
For recent results regarding the rate of approach to equilibrium, see \cite{entropy2021}.

But since a Gaussian integral has closed form, the marginal distribution resulting by computing the integral over ${\bf p}$ gives
\beq
\int_{\pp} f_{B}(\qq,\pp) d\pp = C \exp \left(-\beta {V}({\bf q}) \right) |M(\qq)|^{\half}
\label{maxboltdist1}
\eeq
where $|M(\qq)|$ denotes the determinant of $M(\qq)$ and
$C$ is the normalizing constant that makes the above expression a probability density with respect to the measure $d\qq$. It is natural to re-interpret this as
$$ f_C({\bf q}) \,\doteq\, C \exp \left(-\beta {V}({\bf q}) \right) $$
being the pdf with respect to the measure $|M(\qq)|^{\half} d\qq$.
This is the {\it configurational} Boltzmann distribution.
The associated {\it configurational entropy} is defined as
\beq
S_C \,\doteq\, -\int_{\qq} f_C({\bf q}) \log f_C({\bf q}) |M(\qq)|^{\half} d\qq \,.
\label{scdef}
\eeq
As an example, given a rotating body with orientation parameterized by ZXZ Euler angles,
${\bf q} = [\alpha,\beta,\gamma]^T$, it can be shown that $|M(\qq)|^{\half} = |I|^{\half} \sin \beta$
where $I$ is the moment of inertia tensor. The measure $\sin\beta d\alpha d\beta d\gamma$ is in fact the
bi-invariant integration measure for the rotation group $SO(3)$, and so (\ref{scdef}) can be viewed as
a specific coordinate-dependent version of
(\ref{Gent}).

\subsubsection{Measure-Theoretic Information and Entropy} \label{measureentropy}

Concepts used in information theory and statistical mechanics have been modified for use in the ergodic theory of deterministic
dynamical systems. In this section, the concept of measure-theoretic information, as it appears in ergodic theory, is reviewed.

Given any compact space on which a measure can be defined (for the sake of concreteness, think of a compact manifold with associated
volume element), it is possible to partition that space into a finite number of disjoint subsets, the union of which is, to within a set of measure
 zero, the whole space. That is, given a measurable space (e.g., a compact manifold), $M$, a partition $\alpha = \{A_i\}$ is defined such that
$$ A_{i_1} \cap A_{i_2} = \emptyset \hskip 0.2 true in {\rm if} \hskip 0.2 true in i_1 \neq i_2 \askip \bigcup_{i \in I} A_i = M. $$

The set indicator function,
\beq
I_A(x) \doteq \left\{\begin{array}{lll}
1  & {\rm if} & x \in A \\ \\
0 & {\rm if} & x \notin A
\end{array} \right.,
\label{probtrans53325}
\eeq
together with the partition $\alpha$ can be used to define {\it measure-theoretic information} as
\beq
I_\alpha(x) \doteq - \sum_{A \in \alpha} I_A(x) \log V(A)
\label{mastheent}
\eeq
where $V(A)$ is the volume of $A$ (or, more generally, the measure of $A$) normalized by the volume of the whole space, $M$. $I_\alpha(x)$ reflects the amount of ``information'' that results
from discovering that $x \in A$. If $A$ is a very large region then little information is gained by knowing that $x \in A$. Measure-preserving
actions of Lie groups such as rotations and translations do not affect this quantity.
If $\alpha$ is a
very fine partition, each subset of which has roughly the same volume, then more information is obtained from $I_\alpha(x)$ than if the
partition is coarse. Sometimes it is convenient to raise the subscript and write $I(\alpha)(x)$ in place of $I_\alpha(x)$.

{\it Measure-theoretic entropy} of a partition is defined as
\beq
H(\alpha) \doteq \sum_{A \in \alpha} z(V(A)). 
\label{meaent0302}
\eeq
where
\beq
z(\phi) \doteq \left\{\begin{array}{lll}
-\phi\log \phi  & {\rm if} & 0 < \phi \leq 1 \\ \\
0 & {\rm if} & \phi = 0
\end{array} \right. .
\label{zfuncdef}
\eeq
$H(\alpha)$ is related to $I(\alpha)(x)$ through the equality
$$ H(\alpha) = \int_M I(\alpha)(x) \,dx. $$

Cconcepts of conditional information and entropy have been articulated previously in this measure-theoretic
context, as reviewed in \cite{stochastic}.
Even more sophisticated concepts of entropy that are used in the description of dynamical systems have been built on this concept, including
the {\it Kolmogorov-Sinai entropy} \cite{st_sinaient} and the  {\it topological entropy} of Adler, Konheim and McAndrew \cite{st_adler}.
See, for example, \cite{st_manebook} for more details.
For further reading on general ergodic theory, see \cite{st_billingsley, Bunimovich, st_halmos, st_manebook, st_Moser, st_parry, st_petersen, erg_Ruelle, erg_Ulam}.
For works that emphasize the connection between ergodic theory and group theory, see \cite{erg_Kleinbock, erg_Margulis, st_Mooreerg, st_rokhlin, st_Templeman, st_walters}

\subsection{Entropy Differences, Relative Entropy, and Conditional Entropy} \label{entdifsec}

Regardless of the type of entropy chosen as a descriptive tool for modeling disorder, when it comes to
using entropy in the analysis of physical self-replicating systems, it is the comparison of entropy
in the unassembled state and the assembled state that is important. For example, if a single planar part is located at random in a box at random orientation, its continuous entropy may simply be the log of the volume
of the  space of all possible motions (translations and rotations), with translations limited by the range of
allowable motions of the center of the part. If in the assembly process the part is placed at a specific location, then the entropy difference is a measure of how challenging the task is. For example, if the placement
has some tolerance, then it is the log of the volume of space of motions tolerated in the final part location
that defines its entropy. The difference in entropy between ordered and disordered state is then the relevant measure of the difficulty of assembly, rather than the total entropy.

Discrete entropy is always nonnegative, whereas continuous entropy is not bounded from below. However, when
considering entropy differences, the line between discrete and continuous entropy can be blurred. For example,
discretizing the space of motions of a part at a length scale smaller than the tolerance of the part placement
in the final assemblage will mean that the difference of continuous entropies  and the difference of discretized
versions of continuous entropies (both in the disordered and assembled state) obtained by integrating over sufficiently small bins will be negligible. That is
\beq
\Delta S_{cont} \approx \Delta S_{disc} \,\geq\, 0
\eeq
for appropriate discretization.

A concept from information theory that is useful in proving theorems about the properties of entropy is that
of {\it relative entropy} (or Kullback-Leibler divergence). This can be defined either in the discrete
or continues case. For the continuous case it is
\beq
D_{KL}(p \,\|\, q) \,\doteq\, \int_{X} p(\xx) \log\left(\frac{p(\xx)}{q(\xx)}\right) d\mu(\xx) \,.
\eeq
In the discrete case the integral is replaced by a sum and the integration measure disappears.
Though $D_{KL}(p \,\|\, q) \neq S(q) - S(p)$, it should be noted that
$$ D_{KL}(p \,\|\, q) \,\geq 0 $$
in both the discrete and continuous cases.

Finally, it should be mentioned that the concept of {\it conditional entropy} has a role in characterizing
how sensory information can be used to reduce uncertainty during robotic assembly, including in self-replication
processes.

In the context of a measure space where ${\bf x} = [{\bf x}_1^T,{\bf x}_2^T]^T$,
the conditional probability density is defined as
$$ p({\bf x}_1 \,|\, {\bf x}_2) \,\doteq\, \frac{p({\bf x}_1,{\bf x}_2)}{p({\bf x}_2)}, $$
or in the event-based notation used earlier,
$$ p(E_1 \,|\, E_2) \,\doteq\, \frac{p(E_1 \cap E_2)}{p(E_2)} \,. $$
This is a pdf in the variable ${\bf x}_1$. If $p$ denotes $p({\bf x}_1,{\bf x}_2)$ and $q_2$ denotes the conditional
$p({\bf x}_1 \,|\, {\bf x}_2)$, then the
conditional entropy is the defined as
$$ S(p,q_2) \,\doteq\, \int_{X} p({\bf x}_1,{\bf x}_2) \log p({\bf x}_1 \,|\, {\bf x}_2) d{\bf x} \,. $$
It is related to entropy difference as
$$ S(p,q_2) - S(p,q_1) \,=\, S(p_1) - S(p_2) $$
where $p_i = p({\bf x}_i)$ is the marginal wherein all other degrees of freedom are integrated out.

Conditioning reduces entropy, and the concept of conditional entropy plays a role when information is gained
by sensing, as described in \cite{pk_partent}.

\section{Symmetry and Entropy} \label{symmetry}

This section focuses on the computation of entropy when there is a relationship between a probability density and a  symmetry group. Such a relationship can exist when the space over which the probability density is defined is a group, or if the symmetry group acts on a space and the pdf is invariant under the action. In order
to understand what this means, the first subsection reviews basic concepts in group theory.

\subsection{Symmetry of Parts and Assemblages}

Let $A$ be an assemblage of a constituent set of parts $\{B_i\}$. For example, the assemblage could be a self-replicating robot. The assemblage $A$ always will be of the form
\beq
A = \bigcup_i r_i B_i
\label{bigA}
\eeq
where $r_i \in SE(n)$ are the rigid-body transformations that describe how the parts are assembled.
Each part might have discrete or continuous rotational symmetries, and likewise for the assemblage as a whole.
For example, if the part or assemblage is a cube, there are 24 symmetry operations. Or if it is a cone, it has a one degree-of-freedom continuous group of rotational symmetries.
In the natural world, many viruses are assemblages of protein
parts that have icosahedral rotational symmetry with 60 elements, and the individual protein parts usually lack symmetry.

If $H$ denotes the symmetry group of $A$ then for every
${\bf x} \in A$ and $h \in H$ it is the case that $h \cdot {\bf x} \in A$ and as a whole we write
$$ h \cdot A = A \,\forall h \in H\,.$$
Similarly, if each $B_i$ has a symmetry group $K_i$ then
$$ k \cdot B_i = B_i \,\forall k \in K_i\,.$$

Consequently, if each $r_i$ in (\ref{bigA}) is replaced with $r'_i
= h r_i k$ we see that
$$ \bigcup_i r'_i B_i = h \bigcup_i r_i k B_i = h \bigcup_i r_i B_i = h \cdot A = A \,.$$
In other words, when there are symmetries many ways exist to form an assemblage,
and this makes the job easier.

The space of possible spatial relationships of $N$ parts
prior to the formation of the assemblage is the direct product group $G = SE(n)^N$.
The distribution of parts can be described
by the joint distribution $f(g_1,g_2,...,g_N)$. When the parts are far from each other, this can be approximated
as a product of independent pdfs $f_i(g_i)$. But when they are being assembled, this is not true. Regardless, what
is true is that $f$ will inherit the symmetries of $A$ and each $B_i$, and this will impact the overall parts
entropy. This observation is quantified in the following sections.

\subsection{Groups and Coset Spaces}

Group theory is one of the foundational pillars of abstract algebra, and therefore of all of modern mathematics.
The concept of a group is simple. A group is a set, $G$, together with an operation, $\circ$, such that four properties hold: 1) for every pair $g_1, g_2 \in G$, the product $g_1 \circ g_2 \in G$; 2) a special element $e \in G$ exists such that $g \circ e = e \circ g = g$ for every $g \in G$; 3) For each $g \in G$ there exists an element
$g^{-1} \in G$ such that $g \circ g^{-1} = g^{-1} \circ g = e$; 4) For any three elements $g_1, g_2, g_3 \in G$
the associative property $(g_1 \circ g_2) \circ g_3 = g_1 \circ (g_2 \circ g_3)$ holds. The group is then referred
to as $(G,\circ)$, or when $\circ$ is understood, then the group is denoted simply as $G$ and the product
of $g_1$ and $g_2$ is $g_1 g_2$ rather than  $g_1 \circ g_2$.

The set $G$ can be discrete like the integers, or it can be a continuous (measurable) space. If it is discrete it can have either a finite or infinite number of elements. If it is continuous it can have finite or infinite volume, like the group of rotations or translations of Euclidean space, respectively. The continuous groups with added conditions on the analyticity of the group product and inversion defines Lie groups.

In practice in engineering most groups encountered are either finite-dimensional matrix Lie groups or their discrete
subgroups. For example, the group of translations of the real line $(\IR,+)$ can be described by $2\times 2$ matrices and the group product can be described as multiplication as
$$
\left(\begin{array}{cc}
1 & x \\
0 & 1
\end{array}\right)
\left(\begin{array}{cc}
1 & y \\
0 & 1
\end{array}\right)
\,=\,
\left(\begin{array}{cc}
1 & x+y \\
0 & 1
\end{array}\right)\,. $$

A subgroup is a set inside of a group which contains the identity and
is closed under: 1) multiplication of its elements; and
2) inversion of elements. The associative property holds automatically.
For example, the integers are a subset of the real numbers,
$\mathbb{Z} \subset \IR$ and the group of integers is a subset of the group of real numbers
under addition $(\mathbb{Z},+) \,<\, (\IR,+)$. (The notation $<$ is used to denote subgroup.)

In general if $H < G$, then $G$ can be partitioned into left or right cosets. For any given $g \in G$,
the left coset containing $G$ is defined as
$$ g H \,\doteq\, \{g h\,|\, h \in H\} $$
and a right coset is defined as
$$ H g \,\doteq\, \{h g\,|\, h \in H\} \,. $$
In general $gH \neq Hg$, but equality can hold in special cases.

It is possible for two different elements $g_1, g_2 \in G$ to generate the same coset, e.g.,
$g_1 H = g_2 H$. If all distinct cosets are collected, the result is called a coset space.
The left coset space is
$$ G/H \,=\, \{gH \,|\,  g \in G\} \,. $$
If $H$ and $G$ have a finite number of elements, then the number of elements in $G/H$ is
$|G/H| = |G|/|H|$. This is Lagrange's theorem, which has been known for more than 200 years.
When $G$ is a Lie group and $H$ is a discrete subgroup, the resulting coset space will have the same dimension as $G$, otherwise the coset space will have dimension that is the difference in the dimension of $G$ and $H$.

\subsection{Lie Groups}

Lie groups are groups wherein the set $G$ is a differentiable manifold and the operations of group multiplication
and inversion are analytic. The group $(\IR,+)$ is an example of a Lie group.

The group of rigid-body displacements in the Euclidean plane, $SE(2)$, can be described with
elements of the form
\beq
g(x,y,\theta) \,=\, \left(\begin{array}{ccc}
\cos \theta & -\sin\theta & x \\
\sin\theta & \cos\theta & y \\
0 & 0 & 1 \end{array}\right) \,.
\label{se2elem}
\eeq
The dimension is 3 because there are three free parameters $(x,y,\theta)$. This group is not compact as
$x$ and $y$ can take values on the real line.

The group of pure rotations in 3D can be described by rotation matrices
$$ SO(3) \,\doteq\, \{R \in \IR^{3\times 3} \,|\, RR^T = \II\,,\, \det R = +1\}\,. $$
$SO(3)$ is a compact 3-dimensional manifold. Again, the fact that the dimension of the matrices is also 3 is
coincidental.

A unimodular Lie group is defined by the property that a measure $dg$ can be constructed
such that the integral over the group has the property that
\beq
\int_G f(g)\, dg \,=\, \int_G f(g_0 g)\, dg
\,=\, \int_G f(g g_0)\, dg
\label{shiftinv}
\eeq
for any fixed $g_0 \in G$ and any function $f \in L^1(G)$.  It can also be shown that as a consequence of (\ref{shiftinv})
\beq
\int_G f(g)\, dg \,=\,  \int_G f(g^{-1})\, dg  \,.
\eeq
These properties are natural generalizations of those familiar to us for functions on Euclidean space.

As we are primarily concerned with
probability density functions for which
$$ \int_G f(g) dg = 1\,, $$
these clearly meet the condition of being in $L^1(G)$.

In the case of $SO(3)$ the bi-invariant measure expressed in terms of $Z-X-Z$ Euler angles
$(\alpha,\beta, \gamma)$ is $dR = \sin\beta d\alpha d\beta d\gamma$. In the case of $SE(2)$,
the bi-invariant measure is $dg = dx dy d\theta$.

The convolution of probability density functions on a unimodular Lie group is a natural operation,
and is defined as
\beq
(f_1 * f_2)(g) \,\doteq\, \int_{G} f_1(h) f_2(h^{-1} g) \, dh \,.
\label{convdef}
\eeq
The convolution of two probability density functions is again a probability density.

\subsection{Entropy and Group-Theoretic Decompositions}

Aside from the ability to sustain the concept of convolution, one of the fundamental ways that groups
resemble Euclidean space is the way in which they can be decomposed. In analogy with the way that
an integral over a vector-valued function with argument $\xx \in \IR^n$ can be decomposed into integrals
over each coordinate, integrals over Lie groups can also be decomposed in natural ways. This has
implications with regard to inequalities involving the entropy of pdfs on Lie groups. Analogous expressions
hold for finite groups, with volume replaced by the number of group elements.

\subsubsection{Decomposition of Integrals over Subgroups and Coset Spaces}

Much like the way an intergral over $\IR^2$ can be partitioned into integrals over two copies of $\IR$,
integrals of functions on groups can be partitioned into integrals over subgroups and corresponding coset spaces
In particular, it is possible to retain one representive element of each coset $G/H$ to form a fundamental domain
$F_{G/H} \subset G$. Then the integral of any measurable function can be decomposed as \cite{harmonic}
\begin{equation}
\int_{G} f(g)\,dg =
\int_{F_{G/H}} \left(
\int_H f(g \circ h)\,dh \right)\,d(g)
\label{quomeas}
\end{equation}
where $d(g)$ and $dh$ are
the invariant integration measures
on $F_{G/H}$ and $H$. If $H$ is a discrete subgroup then the integral over $H$ becomes a summation and
$F_{G/H}$ can be taken to be a region with the same dimension as $G$ and $d(g) = dg$.
If $H$ is a Lie subgroup of Lie group $G$, then $F_{G/H}$ is a lower dimensional subset of $G$ and
the intepretation of the integral with respect to $d(g)$ is that it is $dg$ restricted to a lower dimensional space, much like how the integration measure for a surface in Euclidean space is induced from the
ambient Lebesgue measure.

Given a probability density function $f_G : G \,\longrightarrow\, \mathbb{R}_{\geq 0}$, which by definition
satisfies the condition
$$ \int_{G} f_G(g)\,dg \,=\,1 \,, $$
we can define
\begin{equation}
f_{G/H}(gH) \,\doteq\, \int_H f_G(g \circ h)\,dh
\,\,\,\,\, {\rm and}\,\,\,\,\, f_{H}(h) \,\doteq\, \int_{F_{G/H}} f_G(g \circ h)\,d(g)
\,,
\label{ftildedef}
\end{equation}
where $h \in H$.

The coset space $G/H$ is not a subset of $G$, but there is a natural map from $G$ to $G/H$ defined
by $g \,\longrightarrow\, gH$. The integration measure on $G/H$ is defined by the equality
$$ \int_{G/H} f_{G/H}(gH)\,d(gH)
\,\doteq\, \int_{F_{G/H}} f_{G/H}(gH) d(g) \,. $$

Then $F_{G/H}$ and $f_H$ are bona fide probability density functions because
\begin{equation}
\int_{G} f_G(g)\,dg =
\int_{G/H} \left(
\int_H f_G (g\circ h)\,dh \right)\,d(gH) \,=\, \int_{G/H} f_{G/H}(gH)\,d(gH)
\label{quomeas2}
\end{equation}
and similarly
\begin{equation}
\int_{G} f_G(g)\,dg = \int_H
\left(\int_{F_{G/H}}
 f_G (g\circ h)\,d(g) \right)\,dh \,=\, \int_{H} f_{H}(h)\,dh \,.
\label{quomeas2k2kj2e}
\end{equation}

The same construction can be made for right cosets. Moreover,
if we are given a unimodular group $G$ with unimodular
subgroups $K$ and $H$ such that $K \leq H$, we can use
the facts that
$$
\int_{G} f_G(g)\,d(g) =
\int_{G/H} \int_H f_G(g \circ h)\,dh\,d(gH),
$$
and
$$
\int_{H} f_H(h)\,dh =
\int_{H/K} \int_K f_H(h \circ k)\,dk\,d(hK) $$
to decompose the integral
of any nice function $f(g)$  as
\begin{equation}
\int_{G} f(g)\,d(g) =
\int_{G/H} \int_{H/K} \int_{K}
f(g \circ h \circ k)\,dh\,d(hK)\,d(gH).
\label{decomp67}
\end{equation}

The integral of a function
on a group can also be decomposed in terms of two arbitrary subgroups
and a double coset space as
$$ \int_{G} f(g)\,d(g) = \int_{K} \int_{K\backslash G/H}
\int_{H} f(k \circ g \circ h)\,dh \,d(KgH)\,dk. $$
This can be realized using the concept of fundamental domains in $G$ constructed from one representative
element of $G$ per double coset. For concrete examples of fundamental domains of coset and double-coset spaces where $G = SO(3)$ and $H$ and $K$ are finite subgroups, see \cite{Wuelker}.

\subsubsection{Entropy and Group-Theoretic Decompositions}

Here some theorems related to entropy and the decomposition of integrals on groups
originally derived in \cite{gc-geomech,stochastic} are reviewed. These will be relevant later when considering
entropy differences between disordered and assembled states of parts with symmetry.

\begin{theorem} \label{th5.4}
The entropy of a pdf on a unimodular Lie group is no greater than the sum of the marginal
entropies on a subgroup and the corresponding coset space:
\beq
S(f_G) \leq S(f_{G/H}) + S(f_H).
\label{skcnlrt543d}
\eeq
\end{theorem}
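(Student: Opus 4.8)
The plan is to recognize (\ref{skcnlrt543d}) as the subadditivity of entropy expressed in group-theoretic language, and to deduce it from the nonnegativity of the Kullback--Leibler divergence recalled in Subsection \ref{entdifsec}. The decomposition (\ref{quomeas}) identifies $G$, up to a set of measure zero, with the product $F_{G/H} \times H$ carrying the product measure $d(g)\,dh$. Writing each $g$ uniquely as $g = g' \circ h$ with $g' \in F_{G/H}$ and $h \in H$, the function $\wt{f}(g',h) \doteq f_G(g' \circ h)$ is a joint probability density on this product space whose two marginals are precisely $f_{G/H}$ and $f_H$ of (\ref{ftildedef}); that these are bona fide densities is exactly the content of (\ref{quomeas2}) and (\ref{quomeas2k2kj2e}).

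First I would rewrite the left-hand side of (\ref{skcnlrt543d}) using (\ref{quomeas}):
\beq
S(f_G) = -\int_{F_{G/H}} \int_H \wt{f}(g',h) \log \wt{f}(g',h) \,dh\,d(g).
\label{plan1}
\eeq
Next I would take the product of the two marginals, $f_{G/H}(g'H)\,f_H(h)$, as a reference density on $F_{G/H} \times H$ and form the relative entropy $D_{KL}\bigl(\wt{f} \,\|\, f_{G/H} f_H\bigr)$. Splitting the logarithm of the ratio produces three terms: the negative joint entropy $-S(f_G)$ from (\ref{plan1}), together with two cross terms.

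The key step is to collapse each cross term into a marginal entropy. In the term carrying $\log f_{G/H}(g'H)$, integrating $\wt{f}$ over $h \in H$ reproduces $f_{G/H}(g'H)$ by (\ref{ftildedef}), leaving $-\int_{F_{G/H}} f_{G/H}(g'H) \log f_{G/H}(g'H)\,d(g) = -S(f_{G/H})$; symmetrically, integrating out $g' \in F_{G/H}$ in the $\log f_H(h)$ term gives $-S(f_H)$. Collecting the three contributions,
\beq
D_{KL}\bigl(\wt{f} \,\|\, f_{G/H} f_H\bigr) = -S(f_G) + S(f_{G/H}) + S(f_H) \,\geq\, 0,
\label{plan2}
\eeq
and rearranging yields (\ref{skcnlrt543d}), with equality exactly when $\wt{f}$ factors as a product, i.e. when the coset and subgroup coordinates are independent.

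I expect the main difficulty to be measure-theoretic bookkeeping rather than any genuine analytic obstacle. One must keep straight that $f_{G/H}$ lives on the coset space while the outer integral in (\ref{plan1}) runs over the fundamental domain $F_{G/H}$, so the identification $g' \leftrightarrow g'H$ together with the measure relation $\int_{G/H} (\cdot)\,d(gH) = \int_{F_{G/H}} (\cdot)\,d(g)$ must be applied consistently; unimodularity is precisely what supplies the bi-invariant measure $dg$ and hence the validity of (\ref{quomeas}), so no additional invariance assumptions are needed.
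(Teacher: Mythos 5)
Your proposal is correct and follows essentially the same route as the paper, which likewise deduces (\ref{skcnlrt543d}) from the nonnegativity of $D_{KL}(f_G \,\|\, f_{G/H}\cdot f_H)$; you have simply filled in the bookkeeping (the identification of $G$ with $F_{G/H}\times H$ via (\ref{quomeas}) and the collapse of the cross terms into marginal entropies) that the paper leaves implicit.
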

\begin{proof}
This inequality follows immediately from the nonnegativity of the Kullback-Leibler divergence
$$ D_{KL}(f_G \, \| \, f_{G/H} \cdot f_H) \geq 0. $$
\end{proof}

For example, if $G=SE(n)$ is a motion group of $n$-dimensional Euclidean space consisting of
rotation-translation pairs of the form $(R,{\bf t})$ and group law
$$ (R_1, {\bf t}_1) \circ (R_2, {\bf t}_2) = (R_1 R_2, R_1 {\bf t}_2 + {\bf t}_1) $$
and if $H \cong \IR^n$ is the subgroup of pure translations of the form $(\II, {\bf t})$
in $n$-dimensional Euclidean space, then $G/H \cong SO(n)$ consisting of all elements of the form
$(R,{\bf 0})$, and an arbitrary element of $SE(n)$ is written as a pair $(R,{\bf t}) \in SO(n) \times \IR^n$,
then $SO(n) \cong SE(n)/\IR^n$ and we can write
\bea
\int_{SE(n)} f(g) \,d(g) \,&=&\, \int_{SO(n)} \int_{\IR^n}  f(R,{\bf t}) \,d{\bf t} \,dR  \\
\,&=&\, \int_{SO(n)} \left(\int_{\IR^n} f ((\II,{\bf t})\circ (R,{\bf 0})) \,d{\bf t} \right)\,dR \,,
\eea
and the marginal entropies on the right-hand-side of (\ref{skcnlrt543d}) are those computed for pure rotations and pure translations.

\begin{theorem} \label{th5.5}
The entropy of a pdf on a group is no greater than the sum of marginal entropies over any two subgroups and the corresponding double-coset space:
\beq
S(f_G) \leq S(f_K) + S(f_{K\backslash G/H}) + S(f_H).
\label{skcnlrt543jlvd}
\eeq
\end{theorem}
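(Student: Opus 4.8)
The plan is to imitate the proof of Theorem \ref{th5.4} exactly, but using the double-coset decomposition (\ref{decomp67})-style factorization and the nonnegativity of Kullback--Leibler divergence. The key observation is that Theorem \ref{th5.4} was obtained by writing $S(f_G)\le S(f_{G/H})+S(f_H)$ as a consequence of $D_{KL}(f_G\,\|\,f_{G/H}\cdot f_H)\ge 0$; here I want the analogous three-factor statement, so the natural candidate is to show
\beq
D_{KL}\bigl(f_G \,\big\|\, f_K \cdot f_{K\backslash G/H} \cdot f_H\bigr) \,\geq\, 0,
\nonumber
\eeq
where the three factors are the marginal densities obtained by integrating $f_G$ over the two subgroups and over a fundamental domain of the double-coset space, exactly as in (\ref{ftildedef}). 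The heart of the argument is then to verify that expanding this divergence produces precisely $S(f_K)+S(f_{K\backslash G/H})+S(f_H) - S(f_G)$, which upon rearrangement yields (\ref{skcnlrt543jlvd}).

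First I would set up the double-coset fundamental-domain decomposition, writing any $g\in G$ via a chosen representative as $g = k\circ g_0 \circ h$ with $k\in K$, $h\in H$, and $g_0$ ranging over a fundamental domain $F_{K\backslash G/H}$, so that the integral factorizes as in the displayed double-coset formula at the end of Section 5.4.2. Using this, I would define the three marginals by integrating $f_G$ out over the complementary factors,
\beq
f_K(k) \doteq \!\int_{K\backslash G/H}\!\int_H f_G(k\circ g\circ h)\,dh\,d(KgH),\quad
f_H(h) \doteq \!\int_K\!\int_{K\backslash G/H} f_G(k\circ g\circ h)\,d(KgH)\,dk,
\nonumber
\eeq
and $f_{K\backslash G/H}$ analogously by integrating over $K$ and $H$. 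Each of these is a bona fide probability density by the same normalization computation that justified (\ref{quomeas2}). I would then write out $D_{KL}(f_G\,\|\,f_K\cdot f_{K\backslash G/H}\cdot f_H)\ge 0$, split the logarithm of the product into a sum of three logarithms, and in each of the three resulting integrals carry out the marginalizing integrations so that the cross terms collapse to the marginal entropies; the $f_G\log f_G$ term reproduces $-S(f_G)$ and the inequality rearranges to the claim.

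The main obstacle, and the step requiring genuine care rather than a mechanical copy of Theorem \ref{th5.4}, is that the three factors $k$, $g_0$, $h$ in the double-coset parametrization are not independent coordinates on $G$ in the clean way that $G/H$ and $H$ were: the map $(k,g_0,h)\mapsto k\circ g_0\circ h$ can be many-to-one (for instance the stabilizer subgroups $K\cap g_0 H g_0^{-1}$ cause overcounting), so the "product density" $f_K\cdot f_{K\backslash G/H}\cdot f_H$ need not integrate to one against the factored measure, and the KL divergence argument requires both arguments to be genuine probability densities with respect to the same measure. I would therefore need to check that the fundamental-domain construction in (\ref{quomeas}) has already absorbed these stabilizer factors into the definition of $d(KgH)$, so that the decomposition is measure-exact, and only then is the factorized reference density normalized. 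Once that bookkeeping is secured, the nonnegativity of $D_{KL}$ finishes the proof immediately, in complete parallel with Theorem \ref{th5.4}; I would expect the author's proof to be a one-line appeal to $D_{KL}\ge 0$ that quietly relies on this normalization holding.
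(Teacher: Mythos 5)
Your proposal takes essentially the same route as the paper: the author's proof likewise defines $f_K$, $f_H$, and $f_{K\backslash G/H}$ by integrating $f_G(k \circ c_{K\backslash G/H}(KgH) \circ h)$ over the complementary factors of a double-coset fundamental domain and then concludes in one line from $D_{KL}(f_G \,\|\, f_K \cdot f_{K\backslash G/H} \cdot f_H) \geq 0$, exactly as you predicted. Your added caution about the many-to-one parametrization and stabilizer overcounting is a real subtlety that the paper leaves implicit in the choice of measure $d(KgH)$, but it does not change the argument.
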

\begin{proof}
Let
$$ f_K(k) = \int_{K\backslash G/H} \int_H f_G(k \circ c_{K\backslash G/H}(KgH) \circ h) \,dh\, d(KgH) $$
$$ f_H(h) = \int_{K\backslash G/H} \int_K  f_G(k \circ c_{K\backslash G/H}(KgH) \circ h) \,dk\, d(KgH)  $$
and
$$ f_{K\backslash G/H}(KgH) = \int_K \int_H f_G(k \circ c_{K\backslash G/H}(KgH) \circ h)\, dh\, dk, $$
where $c_{K\backslash G/H}: K\backslash G/H\,\rightarrow\,G$ is a function that selects an element of $G$ from
each double coset $KgH$ to form a fundamental domain $F_{K\backslash G/H} \subset G$.
Then again using the nonnegativity of the Kullback-Leibler divergence
$$ D_{KL}(f_G \, \| \, f_K \cdot f_{K\backslash G/H} \cdot f_H) \geq 0 $$
gives (\ref{skcnlrt543jlvd}).
\end{proof}

\begin{theorem} \label{th5.6}
The entropy of a pdf is no greater than the sum of entropies of its marginals over coset spaces
defined by nested subgroups $H < K < G$:
\beq
S(f_G) \leq S(f_{G/K}) + S(f_{K/H}) + S(f_H).
\label{skcnlrt543dhh}
\eeq
\end{theorem}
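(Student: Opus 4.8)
The plan is to follow the template already established in the proofs of Theorems \ref{th5.4} and \ref{th5.5}: attach the correct marginal densities to the three-fold decomposition of the Haar measure induced by the chain $H < K < G$, and then read off the inequality from the nonnegativity of a single Kullback--Leibler divergence. Since the nesting here is $G \supset K \supset H$, the relevant factorization is exactly (\ref{decomp67}) with the roles of the middle and inner subgroups relabelled, giving
\beq
\int_{G} f(g)\,d(g) = \int_{G/K} \int_{K/H} \int_{H} f(g \circ k \circ h)\,dh\,d(kH)\,d(gK),
\eeq
so that each $g \in G$ is written, up to the choice of fundamental domains, as $g = c_{G/K}(gK) \circ c_{K/H}(kH) \circ h$ using representative-selecting maps $c_{G/K}: G/K \to G$ and $c_{K/H}: K/H \to K$.

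Next I would define the three marginals by integrating $f_G$ against this decomposition: $f_{G/K}(gK)$ by integrating out the $K/H$ and $H$ factors, $f_{K/H}(kH)$ by integrating out the $G/K$ and $H$ factors, and $f_H(h)$ by integrating out the two coset factors. By the argument already used in (\ref{quomeas2}) and (\ref{quomeas2k2kj2e}), each of these is a bona fide probability density on its respective space, because the decomposed measure factors cleanly.

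The decisive step is then to expand $D_{KL}(f_G \,\|\, f_{G/K} \cdot f_{K/H} \cdot f_H) \geq 0$, where the product density is evaluated at the decomposition of $g$. Splitting the logarithm of the product yields three cross terms; in each one the decomposition collapses the integral of $f_G$ against a marginal that depends on only one factor, reproducing $-S(f_{G/K})$, $-S(f_{K/H})$, and $-S(f_H)$ respectively, while the leading term contributes $-S(f_G)$. Rearranging the resulting inequality $-S(f_G) + S(f_{G/K}) + S(f_{K/H}) + S(f_H) \geq 0$ gives (\ref{skcnlrt543dhh}).

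The main obstacle is bookkeeping rather than analysis: one must verify that collapsing each cross term integrates out precisely the two factors on which that marginal does not depend, and that the iterated integral in the displayed decomposition is independent of the choice of fundamental domains $F_{G/K}$ and $F_{K/H}$. This is where the unimodularity of $G$, $K$, and $H$ enters, since it guarantees that the invariant measures on the quotients $G/K$ and $K/H$ are well defined and that the three marginals integrate to one. With those checks in place the result is entirely parallel to the two preceding theorems.
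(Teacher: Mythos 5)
Your proof is correct, but it takes a different route from the one in the paper. You generalize the argument of Theorem \ref{th5.5}: write the Haar measure via the three-fold nested decomposition (the relabelled version of (\ref{decomp67})), define all three marginals directly from $f_G$, and extract the inequality from the nonnegativity of the single divergence $D_{KL}(f_G \,\|\, f_{G/K}\cdot f_{K/H}\cdot f_H)$. The paper instead simply applies Theorem \ref{th5.4} twice: once to the pair $G \supset K$ to get $S(f_G) \leq S(f_{G/K}) + S(f_K)$, and once to the pair $K \supset H$ to get $S(f_K) \leq S(f_{K/H}) + S(f_H)$, then chains the two inequalities; the explicit triple-decomposition formulas for $f_{G/K}$, $f_{K/H}$, and $f_H$ appear in the paper only to identify the marginals, not as the engine of the proof. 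The paper's iteration is shorter and hides the bookkeeping inside the already-proved two-factor result (the only thing to check is that the marginals of $f_K$ over $K/H$ and $H$ coincide, via Fubini, with those defined directly from $f_G$); your direct product-divergence argument requires verifying that $f_{G/K}\cdot f_{K/H}\cdot f_H$ integrates to one under the factored measure, but in exchange it is self-contained, runs exactly parallel to the double-coset case, and extends in one step to arbitrarily long chains of nested subgroups rather than by induction. Both arguments are sound and rely on the same two ingredients: unimodularity to make the quotient measures and marginals well defined, and nonnegativity of the Kullback--Leibler divergence.
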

\begin{proof}
Given a subgroup $K$ of $H$, which is itself a subgroup of $G$ (that is, $H < K < G$), apply (\ref{skcnlrt543d}) twice.
Then $S(f_G) \leq S(f_{G/K}) + S(f_{K})$ and
$S(f_K) \leq S(f_{K/H}) + S(f_H)$, resulting in (\ref{skcnlrt543dhh}).
Explicitly, $g = c_{G/K}(gK) \circ c_{K/H}(kH) \circ h$, and so $f_G(g) = f_G(c_{G/K}(gK) \circ c_{K/H}(kH) \circ h)$. Therefore,
$$ f_{G/K}(gK) = \int_{K/H} \int_H f_G(c_{G/K}(gK) \circ c_{K/H}(kH) \circ h) \,dh \,d(kH) $$
$$ f_{K/H}(kH) = \int_{G/K} \int_H f_G(c_{G/K}(gK) \circ c_{K/H}(kH) \circ h) \,dh\,  d(gK) $$
and
$$ f_H(h) = \int_{G/K} \int_{K/H} f_G(c_{G/K}(gK) \circ c_{K/H}(kH) \circ h) \,d(kH)\, d(gK). $$
\end{proof}

\subsection{Application to Self-Replicating Robots}

As stated earlier, the entropy difference between a disordered state and an assembled one
is a measure of the difficulty of the assembly process. In a maximally disordered state, the entropy
might simply be the log of the volume of allowable motions, which is the maximal entropy possible for
an isolated part confined to a finite volume. Or, if an external potential field is applied,
the probability density would become the configurational Boltzmann distribution, which has lower entropy,
making an assembly process easier. Another way to reduce entropy is by reducing the size of the free space
in which parts can move. This is discussed in detain in the next section.

But the relevance of the formulations earlier in this section are not related to the entropy of the
disordered state. Rather, the role of part symmetry is related to the allowed entropy of the assembled state.
For example, if a part is a cube that needs to be inserted into an assemblage, then the space of allowable correct
orientations of the part in the final assemblage is 24 times larger than if no symmetry existed. Or, in an extreme
case, if the part is spherical, and it only needs to be inserted into a hemispherical slot, then orientation is
completely irrelevant. For example, a ball on a roulette wheel easily finds its place. Another example would be
loading bullets into a revolver wherein the $SO(2)$ symmetry of the bullets and the cylindrical nature of the
chambers make it much easier than if the cross sections had no symmetry. In automated manufacturing and assembly systems these principles are well known \cite{pk_boothroyd, pk_booth1, pk_Homem, pk_erdmann,pk_liu,pk_Whitney}.

The value added by the analysis of this section is that the entropy of the assembled state can be quantified
when constituent parts (and possibly the assemblage as a whole) have symmetry. If an individual part has symmetry
group $K$, then the space of motions that need to be considered during assembly is reduced from $G$ to $G/K$.
Or, put another way, if $K$ is a finite subgroup there are $|K|$ times as many correct ways to assemble in comparison to when there is no symmetry. Similarly, if the overall assemblage has symmetry group $H$, then the space is reduced from $G$ to $H\backslash G$. And when both the assemblage and constituent parts have symmetry then
the reduction is from $G$ to $H\backslash G/K$. Any and all such symmetries effectively reduce the entropic burden of the assembly process.

Explicitly, if a part has symmetry then its configurational probability density function in the assemblage
will inherit this symmetry as $f(g) = f(g \circ k)$ for all $k \in K$, making it a left-coset function for which
the analysis in the previous subsections becomes directly applicable. Or, put another way
$$ f(g) = \frac{1}{|K|} \sum_{k\in K} f(g \circ k) \,. $$
In the case of a continuous symmetry the sum is replaced by an integral, and $|K|$ is replaced by the volume of $K$.
A general property of entropy, which follows from the theory of convex functions, is that averaging of any sort
increases entropy.

\section{Parts Entropy and the Principal Kinematic Formula} \label{kinematic}

Physical self-replicating systems that are able to assemble basic parts to form replicas of themselves in the presence of uncertainties in the positions and orientations of feed parts are more robust than those that require perfectly palletized input parts. As discussed earlier, ``parts entropy'' is a statistical measure of the ensemble of all possible positions and orientations of a single part with a given probability density in position and orientation. Here a related issue is considered: if the part is confined to move uniformly at random in a finite container, what is its parts entropy ?

In this section it is shown how the  ``Principal Kinematic Formula'' (PKF) from the field of Integral Geometry can be used
to model the reduction in allowable motion imposed by the presence of an obstacle. Since entropy in the disordered state is related to the volume of allowable motion, this is relevant to the analysis of entropy change in
self-replicating systems. Here the PKF is stated without proof. References on this topic in which derivations are provided include \cite{pk_santalo}.

\subsection{The Principal Kinematic Formula for Collision}

The {\it indicator function} on any measurable body, $C$, is defined by:
$$ \iota(C) \,\doteq\, \left\{\begin{array}{cc}
               1 & \mbox{if $C \neq \emptysett$} \\
               0 & \mbox{for $C = \emptysett$}
               \end{array} \right. $$
If $g\in G$ is an element of a group (e.g., the group of rigid-body motions, $SE(n)$)
that acts on $C$ without shrinking
it to the empty set, then $\iota(gC) = \iota(C)$ where
$$ g C \,\doteq\, \{g \cdot {\bf x}| {\bf x} \in C\}. $$
For now let $G = SE(n)$, the group of rigid-body motions in $\IR^n$.
If $g = (A, {\bf a})$ is the rigid-body motion with rotational part
$A \in SO(n)$ and translational part ${\bf a} \in \IR^n$, then the action
of $G$ on $\IR^n$ is $g \cdot {\bf x} = A {\bf x} + {\bf a}$, and hence $gC$ is well defined.
The indicator function is one of many functions on a body that is invariant under rigid-body motion. Others
include the volume of the body, the surface area (or perimeter in the two-dimensional case).

Given two convex bodies, $C_0$ and $C_1$.
Let $C_0$ be stationary, and let $C_1$ be mobile.
The intersection of these two convex bodies is either a
convex body or is empty.
Furthermore, the rigid-body motion (or even affine deformation)
of a convex body does not change the fact that it is convex. Therefore,
when $C_0 \cap gC_1$ is not empty it will be a convex body, and
$$f_{C_0,C_1}(g) \,\doteq\, \iota(C_0 \cap gC_1) $$
will be a compactly supported function on $G$ that takes the value of $1$ when $C_0$ and the moved
version of $C_1$ (denoted as $gC_1$) intersect, and it will be zero otherwise.

Counting up all values of $g$ for which an intersection occurs is then
equivalent to computing the integral
\begin{equation}
{\mathcal I}(C_0, C_1) \,=\, \int_{G} \iota(C_0 \cap g \cdot C_1) \,dg.
\label{jdef}
\end{equation}

An amazing result is that the integral ${\mathcal I}$ can be computed exactly using only elementary geometric properties of the bodies $C_0$ and $C_1$ without actually having
to perform an integration over $G$. While the general theory has been developed
by mathematicians for the case of bodies in $\IR^n$ \cite{pk_chern} and in manifolds
on which some Lie group acts (see \cite{pk_santalo} and references therein), we are concerned only with the cases
of bodies in $\IR^2$ and $\IR^3$.

In the planar case, we can write (\ref{jdef}) explicitly as
\begin{equation}
{\mathcal I}(C_0, C_1) = \int_{-\pi}^{\pi} \int_{-\infty}^{\infty} \int_{-\infty}^{\infty}
\iota(C_0 \cap g(t_1,t_2,\theta)C_1) \,dt_1 dt_2 d\theta
\label{jdefse2}
\end{equation}
where the rotational part of $g = (R,{\bf t})$ is described by
$$ R = \left(\begin{array}{cc}
\cos \theta & -\sin \theta \\
\sin \theta & \cos \theta \end{array} \right) $$
and the translational part is given by the vector ${\bf t} = [t_1, t_2]^T$.

Spatial rigid-body motions can be parameterized as
$$ g(t_1,t_2,t_3; \alpha, \beta, \gamma) = \left(\begin{array}{ccc }
R(\alpha,\beta,\gamma) & & {\bf t} \\ \\
{\bf 0}^T & & 1 \end{array} \right), $$
where $R(\alpha,\beta,\gamma)$ is a rotation matrix expressed in terms of the ZXZ Euler-angles and ${\bf t} \in \IR^3$ is the translation vector. The bi-invariant integration measure for the group $SE(3)$ is then, to within an arbitrary scaling constant,
$$ dg \,=\, \sin \beta \, d\alpha \, d\beta \, d\gamma \, dt_1 \, dt_2 \, dt_3. $$

\begin{theorem}
(Blaschke, \cite{pk_blaschke2}):
Given planar convex bodies $C_0$ and $C_1$, with $C_0$ fixed and $C_1$ free to move under the action
of $SE(2)$, then the volume in the region of $SE(2)$ that places the bodies in collision is
\begin{equation}
{\mathcal I}(C_0, C_1) = 2\pi [A(C_0) + A(C_1)] + L(C_0) L(C_1)
\label{pbl1}
\end{equation}
where $L(\cdot)$ is the perimeter of a body, and $A(\cdot)$ is the area.
\end{theorem}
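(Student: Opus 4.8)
The plan is to evaluate the integral (\ref{jdefse2}) by iterating it: first integrate over all translations with the rotation angle $\theta$ held fixed, and only afterwards integrate over $\theta$. First I would fix $\theta$ and observe that $C_0 \cap (R_\theta C_1 + {\bf t}) \neq \emptysett$ precisely when there exist ${\bf x} \in C_0$ and ${\bf y} \in C_1$ with ${\bf t} = {\bf x} - R_\theta {\bf y}$, i.e.\ exactly when ${\bf t}$ lies in the Minkowski sum $C_0 \oplus (-R_\theta C_1)$. Hence the inner translation integral collapses to the planar area of this sum,
\[ \int_{\IR^2} \iota\bigl(C_0 \cap (R_\theta C_1 + {\bf t})\bigr)\, dt_1\, dt_2 = A\bigl(C_0 \oplus (-R_\theta C_1)\bigr), \]
so that ${\mathcal I}(C_0,C_1) = \int_{-\pi}^{\pi} A\bigl(C_0 \oplus (-R_\theta C_1)\bigr)\, d\theta$. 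This step uses only measurability and compact support of $\iota(C_0 \cap gC_1)$, which follow from boundedness of the two bodies.

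Next I would invoke the Minkowski (Steiner) formula for the area of a sum of two planar convex bodies, $A(K \oplus L) = A(K) + 2V(K,L) + A(L)$, where $V(K,L)$ is the mixed area; convexity of $C_0$ and $C_1$ is used precisely here. Since area is invariant under the rotation $R_\theta$ and the central reflection ${\bf y} \mapsto -{\bf y}$, one has $A(-R_\theta C_1) = A(C_1)$, so the two pure-area terms contribute $\int_{-\pi}^{\pi} [A(C_0) + A(C_1)]\, d\theta = 2\pi[A(C_0) + A(C_1)]$, which already accounts for the first summand of (\ref{pbl1}).

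The crux, which I expect to be the main obstacle, is the rotation average of the mixed-area term. The plan is to represent the mixed area through support functions: writing $h_0(\phi)$ for the support function of $C_0$ and $\rho_1(\phi)$ for the radius of curvature of $C_1$, both as functions of the outer-normal direction $\phi$, one has $V(C_0, R_\alpha C_1) = \tfrac12 \int_0^{2\pi} h_0(\phi)\, \rho_1(\phi - \alpha)\, d\phi$, using that rotating $C_1$ by $\alpha$ shifts its curvature function by $\alpha$. Because $-R_\theta = R_{\theta + \pi}$ in the plane, I can replace $-R_\theta C_1$ by a rotated copy of $C_1$ and absorb the reflection into the range of integration; then by Fubini and periodicity,
\[ \int_{-\pi}^{\pi} 2V\bigl(C_0, -R_\theta C_1\bigr)\, d\theta = \int_0^{2\pi} h_0(\phi) \left( \int_0^{2\pi} \rho_1(\psi)\, d\psi \right) d\phi = L(C_0)\, L(C_1), \]
since $\int_0^{2\pi} \rho_1\, d\psi = L(C_1)$ and $\int_0^{2\pi} h_0\, d\phi = L(C_0)$. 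Adding the two contributions gives (\ref{pbl1}).

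The genuine difficulty hidden in the third step is that the radius-of-curvature representation presupposes a smooth, strictly convex boundary; for a general convex body the curvature function must be replaced by the surface area (Steiner) measure $dS_{C_1}$, so that $V(C_0, R_\alpha C_1) = \tfrac12 \int_{S^1} h_0\, dS_{R_\alpha C_1}$, and the cleanest route is to prove the identity for smooth bodies and then pass to the limit by approximating arbitrary convex bodies in the Hausdorff metric, using continuity of area, perimeter, and mixed area. I would also need to justify interchanging the $\phi$- and $\theta$-integrations (Fubini, valid since all integrands are bounded and nonnegative) and the rigid-motion invariance of $A$ and $L$ that is used silently throughout.
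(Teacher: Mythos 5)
The paper does not prove this theorem at all: it explicitly says the principal kinematic formula ``is stated without proof'' and defers to Blaschke and Santal\'{o}, then merely checks the formula on the example of two disks. So there is no in-paper argument to compare against, and your proposal has to stand on its own. It does: what you describe is the standard modern derivation found in the cited literature (e.g.\ Santal\'{o} and Schneider--Weil). The reduction of the inner translation integral to the area of the Minkowski sum $C_0 \oplus (-R_\theta C_1)$ is correct and needs only that the bodies are compact; the Steiner decomposition $A(K\oplus L)=A(K)+2V(K,L)+A(L)$ is exactly where convexity is used; and the rotational average of the mixed area, $\int_{-\pi}^{\pi} 2V(C_0,-R_\theta C_1)\,d\theta = L(C_0)L(C_1)$, follows from the support-function representation together with Cauchy's formula $L(K)=\int_0^{2\pi}h_K(\phi)\,d\phi$ applied twice, after absorbing the reflection via $-R_\theta = R_{\theta+\pi}$. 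You have also correctly identified the only genuine technical debt, namely that the radius-of-curvature density exists only for smooth strictly convex boundaries, and the remedy you propose (replace $\rho_1\,d\phi$ by the surface-area measure $dS_{C_1}$, or approximate in the Hausdorff metric and use continuity of $A$, $L$, and the mixed area) is the standard and correct one. As a consistency check, your formula reproduces the paper's disk computation ${\mathcal I}=2\pi\cdot\pi(r_0+r_1)^2$. The one classical alternative worth knowing is Santal\'{o}'s original route via boundary densities and Poincar\'{e}'s formula for the expected number of boundary intersections, but the mixed-area argument you give is cleaner and fully adequate.
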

For example, if the bodies are disks of radius $r_0$ and $r_1$, the above formula gives
$$ {\mathcal I}(C_0, C_1) = 2\pi [\pi r_0^2 + \pi r_1^2] + (2\pi r_0)(2\pi r_1)\,. $$
Clearly for disks the condition for collision is that the distance between the centers is less than or equal to
$r_0+r_1$, and so for this example
$$ {\mathcal I}(C_0, C_1) = (2\pi) \cdot \pi (r_0+r_1)^2 $$
where $2\pi$ is the volume of the space of planar rotations, $SO(2)$. The above two expressions are equal for disks.

The three-dimensional analog of this formula is given in the theorem below.

\begin{theorem}
(Blaschke, \cite{pk_blaschke2}):
Given 3D convex bodies $C_0$ and $C_1$, then when holding $C_0$ fixed and allowing $C_1$ to move, the volume
in the space of motions corresponding to the bodies being in collision will be
\beq
{\mathcal I}(C_0, C_1) \,=\, 8\pi^2[V(C_0)+V(C_1)] \label{sbl1}
+ 2\pi [A(C_0) M(C_1) + A(C_1) M(C_0)]
\eeq
where $V(\cdot)$ is the volume of the body and
$M(\cdot)$ and $A(\cdot)$ are respectively the integral of mean curvature
area and of the surface enclosing a body.
\end{theorem}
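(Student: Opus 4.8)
The plan is to prove this three-dimensional Blaschke formula by the classical integral-geometric route, first reducing the indicator $\iota$ to the Euler characteristic and then applying Hadwiger's characterization of invariant valuations. The starting observation is that for convex $C_0,C_1$ the intersection $C_0 \cap gC_1$ is again convex (or empty), so its Euler characteristic $\chi$ equals $1$ exactly when the set is nonempty and $0$ otherwise; that is, $\iota(C_0 \cap gC_1) = \chi(C_0 \cap gC_1)$. Since $\chi$ is an additive valuation on convex bodies, and Haar integration over $SE(3)$ preserves both additivity and rigid-motion invariance, the functional $C_0 \mapsto \mathcal{I}(C_0,C_1)$ is, for each fixed $C_1$, a continuous rigid-motion-invariant valuation on convex bodies in $\IR^3$.

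First I would invoke Hadwiger's characterization theorem, which asserts that every such valuation is a linear combination of the four intrinsic volumes $V_0 = \chi$, $V_1$ (proportional to the integral of mean curvature $M$), $V_2$ (proportional to surface area $A$), and $V_3 = V$ (ordinary volume). Thus $\mathcal{I}(C_0,C_1) = \sum_{j=0}^{3} a_j(C_1)\,V_j(C_0)$, and since $\mathcal{I}$ is symmetric in its two arguments --- substitute $g \to g^{-1}$ and use the unimodularity of $dg$, together with $C_0 \cap g^{-1}C_1 \neq \emptyset \iff C_1 \cap gC_0 \neq \emptyset$ --- each coefficient $a_j(C_1)$ is itself an invariant valuation in $C_1$, again a combination of the $V_k(C_1)$. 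This yields the bilinear form
\beq
\mathcal{I}(C_0,C_1) = \sum_{j,k=0}^{3} c_{jk}\, V_j(C_0)\, V_k(C_1).
\eeq
A homogeneity argument then collapses the sum to the diagonal $j+k=3$: rescaling both bodies by $\lambda$ sends $dt_1 dt_2 dt_3 \mapsto \lambda^3\, dt_1 dt_2 dt_3$ while leaving the rotational measure fixed, so $\mathcal{I}(\lambda C_0,\lambda C_1) = \lambda^3\,\mathcal{I}(C_0,C_1)$, whereas $V_j(C_0)V_k(C_1)$ scales as $\lambda^{j+k}$. The surviving products are exactly $\chi\!\cdot\!V$, $V\!\cdot\!\chi$, $M\!\cdot\!A$, and $A\!\cdot\!M$, matching the four terms in (\ref{sbl1}).

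It remains to pin down the constants, and here I would simply evaluate both sides on a pair of balls $C_0 = B_{r_0}$, $C_1 = B_{r_1}$. Because a ball is rotation-invariant, the rotational factor contributes its full volume $\int_{SO(3)} dR = 8\pi^2$, and $B_{r_0} \cap gB_{r_1} \neq \emptyset$ precisely when the centers lie within distance $r_0+r_1$; hence $\mathcal{I}(B_{r_0},B_{r_1}) = 8\pi^2 \cdot \frac{4}{3}\pi(r_0+r_1)^3$. Expanding $(r_0+r_1)^3$ and substituting the spherical values $V = \frac{4}{3}\pi r^3$, $A = 4\pi r^2$, $M = 4\pi r$ for each radius fixes every coefficient simultaneously and reproduces precisely the $8\pi^2$ and $2\pi$ factors in (\ref{sbl1}).

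The main obstacle is Hadwiger's theorem itself, which is the only deep input; granting it, the remainder is bookkeeping. An alternative that sidesteps Hadwiger would first integrate over translations, using that $\int_{\IR^3} \iota(C_0 \cap (A C_1 + {\bf t}))\, d{\bf t} = \Vol(C_0 \oplus (-AC_1))$ is the volume of a Minkowski sum, then expand this volume into mixed volumes by Minkowski's theorem and integrate over $A \in SO(3)$. In that route the genuinely hard step is the rotational averaging of the mixed-volume cross terms, $\int_{SO(3)} V(C_0[1],(AC_1)[2])\, dR$ and its transpose, which requires a lower-dimensional Crofton-type identity to show the average factors into a product of quermassintegrals of $C_0$ and $C_1$; this is essentially Hadwiger's content repackaged, which is why I would prefer the valuation-theoretic argument.
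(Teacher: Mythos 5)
The paper does not actually prove this theorem: it states explicitly that ``the PKF is stated without proof'' and, after the statement, refers the reader to Blaschke, Santal\'{o}, and the author's own book for derivations. So there is no in-paper argument to compare against; what you have written is the standard modern valuation-theoretic proof (essentially the one in the Klain--Rota and Schneider--Weil references the paper cites), and its skeleton is sound: $\iota=\chi$ on convex intersections, Hadwiger's characterization applied in the $C_0$ slot, symmetry ${\mathcal I}(C_0,C_1)={\mathcal I}(C_1,C_0)$ from inversion-invariance of the Haar measure (which the paper itself records) to bootstrap Hadwiger into the $C_1$ slot, degree-$3$ homogeneity to kill every product except $\chi\cdot V$, $V\cdot\chi$, $M\cdot A$, $A\cdot M$, and the two-parameter family of ball pairs to fix the constants --- which does determine both unknowns, since $r_0$ and $r_1$ are free and matching the coefficients of $r_0^3$ and $r_0^2 r_1$ gives $8\pi^2$ and $2\pi$ respectively.

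Two steps are glossed over and would need a sentence each in a complete write-up. First, Hadwiger's theorem requires continuity of the valuation in the Hausdorff metric, and the continuity of $C_0\mapsto\int_{SE(3)}\chi(C_0\cap gC_1)\,dg$ is not free: it follows from dominated convergence together with the fact that the set of motions $g$ for which $gC_1$ meets $C_0$ only along boundary points has Haar measure zero for convex bodies, but you should say so. Second, the assertion that each coefficient $a_j(C_1)$ is itself a continuous invariant valuation of $C_1$ needs an extraction argument: evaluate ${\mathcal I}(\cdot,C_1)$ on finitely many fixed test bodies (balls of several radii suffice), invert the resulting linear system, and observe that each $a_j$ is then a fixed linear combination of the quantities ${\mathcal I}(K_i,C_1)$, whence invariance, additivity, and continuity in $C_1$ follow from the symmetry you established. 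With those patches the proof is complete modulo Hadwiger's theorem itself, which you correctly identify as the one deep input; your alternative route via $\int_{\mathbb{R}^3}\iota(C_0\cap(AC_1+{\bf t}))\,d{\bf t}=\Vol(C_0\oplus(-AC_1))$ and rotational averaging of mixed volumes is closer in spirit to the classical derivations of Blaschke and Santal\'{o} that the paper actually points to.
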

This result was developed by Wilhelm Blasche a century ago, and proofs can be found in
\cite{stochastic,pk_blaschke2, pk_santalo}.

Note that ${\mathcal I}(C_0, C_1) \,=\, {\mathcal I}(C_1, C_0)$, which is a consequence of the bi-invariance of
integration on the Lie group $SE(3)$, which is unimodular.

As an example of the above theorem, if $C_i$ is a solid ball of radius $r_i$, then the above formula gives
$$ {\mathcal I}(C_0, C_1) \,=\, 8\pi^2 \cdot \frac{4\pi}{3} [r_0^3 + r_1^3]
+ 2\pi [4\pi r_0^2 \cdot 4\pi r_1 + 4\pi r_1^2 \cdot 4\pi r_0] \,. $$
This matches the expected result of
$$ {\mathcal I}(C_0, C_1) \,=\, 8\pi^2 \cdot \frac{4\pi}{3} (r_0 + r_1)^3 $$
which is the volume of $SO(3)$ multiplying the volume of the ball of radius $r_0+r_1$
corresponding to the volume of all motions of the center of $C_1$ that would place it in collision with $C_0$.

The literature on integral geometry spanning the past century is immense. For further reading on the Principal Kinematic Formula (and Integral Geometry more generally) see \cite{pk_Klain,pk_poin,pk_santalo,SchneiderWeilbook}.
Related work is concerned with determining when
one body can be contained in another \cite{pk_Zhang,pk_zhouj92,pk_zhouj95,pk_zhouj98} and the characterization
of free motion of one body moving inside another \cite{pk_Karnik,kin-contain}.
Namely, if a part can be contained, what is the volume of its free motion ? This is obviously related
to the parts entropy of a final assemblage in which there are clearances.
This is the subject of the following section.

\subsection{Kinematics of Containment}

Instead of considering the volume within the motion group describing collision of two bodies, we can instead evaluate the volume of allowable motion of a small body within a large convex container. In this scenario a formula similar to the principal kinematic formula results under mild conditions. Namely, if all principal curvatures of the
inner body are larger than every principal curvature of the container, then in the planar case \cite{kin-contain}
\begin{equation}
{\mathcal V}(C_1, C_2) = 2\pi [A(C_1) + A(C_2)] - L(C_1) L(C_2)
\label{pbl1a}
\end{equation}
and in the spatial case
\beq
{\mathcal V}(C_1, C_2) \,=\, 8\pi^2[V(C_1)+V(C_2)] \label{sbl1a}
- 2\pi [A(C_1) M(C_2) + A(C_2) M(C_1)]
\eeq
Where $C_2$ is the container and $C_1$ is again the moving body

\subsection{Entropy of a Convex Part Free to Move in a Container with an Obstacle}

Suppose that as a strategy for assembling parts in a robotic self-replication process, the original
robot first either cages a part that is to be assembled, or pushes it into a bowl. These actions
do not require sophisticated manipulation, and hence are appropriate for simplifying the requirements on
a self-replicating system. Both a cage and bowl
are examples of containers. This process physically
limits the parts entropy.

If there no preferred positions and orientations of the part within the container, then the parts entropy
is simply the volume of allowable motion. In the case when there is an obstacle (such as a post or pillar)
in the container that limits allowable motion, and if the obstacle and container geometries are such that
the moving part never gets jammed between the container and the obstacle, the parts entropy will be
$$ S \,=\, \log \left({\mathcal V}(C_1, C_2)  - {\mathcal I}(C_0, C_1) \right) \,. $$

\section{Error Propagation in Parts Manufacturing} \label{error}

When considering the production of parts from raw materials, the question of how to maintain fidelity generation after generation arises. The process of reliably producing replicas
by employing error-reduction techniques is related to information theoretic entropy and the theory of
error-correcting codes. One way to achieve error correction is to use symmetry as a parity check. That is, original parts that are intended to be symmetrical in the first generation will lose symmetry in future generations.
For example, if a mould makes a casting and that casting is used to make a subsequent mold, then with each iteration
there will be some corruption of the result due to random flaws that are introduced. But if the parts have symmetries, then random flaws will affect different areas of a part in different ways, and imposing information about the symmetry of the original parts on the replicas as a post-processing step in the manufacturing process will
help in maintaining tolerances and to squash the magnitude of errors in reproduction.

No matter what error correction methods are put in place, changes resulting from compounded manufacturing errors in artificial self-replicating machines would almost always result in reduction of functionality over generations.
This might not be a bad thing if the purpose of a self-replicating physical system is to magnify initial human effort in harvesting in situ resources in outer space. For example, a magnification by a factor between 10-100 before they cease to replicate would be tremendous, whereas developing immortal (and evolvable) self-replicating robots could have unforseen negative consequences.

Whereas living systems have various levels of feedback to ensure stability over generations from the molecular (DNA replication and transcription) to the macroscopic (competitive survival advantages and preditor-prey equilibria), the same is not true in scenarios in which artificial self-replicating systems would be deployed. For example, a self-replicating factory designed to harvest materials on the moon or Mars or other inert environments in order to bootstrap mankind's reach into the solar system need not worry about competing for resources with other entities. The goal of self-replicating systems is not evolution, but rather the reliable
copying of engineered systems that have a specific mission beyond survival.


\section{Conclusions} \label{conclusions}

Artificial self-replicating systems that process materials, make parts, and assemble the parts to make physical copies of themselves have been imagined for more than half a century. The potential impact of such systems for the development of resources in outer space are tremendous. Indeed, this might be the only path forward for colonization of the solar system, as well as harnessing resources to mitigate the effects of global warming. Various technologies have developed in recent years that make self-replication with vitamin resources more realistic than ever. These include metal additive manufacturing (a.k.a 3D printing), and new methods for materials processing. The purpose
of this paper was not to review these technological advances, but rather to expand the set of theoretical and algorithmic foundations with which to evaluate progress in this field. Several topics that interwine entropy,
information, complexity, and error propagation were articulated. Advanced mathematical tools from the theory of
Lie groups and Integral Geometry were introduced to the a-life community in the context of artificial physical
self-replicating systems.


\begin{thebibliography}{99}

\bibitem{f1}
Freitas, R.A. and Merkle, R.C., 2004. Kinematic self-replicating machines. Landes.

\bibitem{f2}
Freitas, R.A. and Gilbreath, W.P., 1982. Advanced automation for space missions. Journal of the Astronautical Sciences, 30(1), p.221.

\bibitem{f3}
Freitas, R., Zachary, W., 1981. A self-replicating, growing lunar factory. In 4th Space manufacturing; Proceedings of the Fifth Conference (p. 3226).

\bibitem{c1}
Chirikjian, G.S., Zhou, Y. and Suthakorn, J., 2002. Self-replicating robots for lunar development. IEEE/ASME transactions on mechatronics, 7(4), pp.462-472.

\bibitem{l1}
Lee, K., Moses, M. and Chirikjian, G.S., 2008. Robotic self-replication in structured environments: Physical demonstrations and complexity measures. The International Journal of Robotics Research, 27(3-4), pp.387-401.

\bibitem{l2}
Lee, K. and Chirikjian, G.S., 2007. Robotic self-replication. IEEE robotics and automation magazine, 14(4).

\bibitem{moses2020}
Moses, M.S. and Chirikjian, G.S., 2020. Robotic self-replication. Annual Review of Control, Robotics, and Autonomous Systems, 3, pp.1-24.

\bibitem{pk_case}
Chirikjian, G.S., ``Parts Entropy, Symmetry, and the Difficulty
of Self-Replication,'' {\it Proc. ASME Dynamic Systems and Control
Conference}, Ann Arbor, Michigan, Oct 20-22, 2008.

\bibitem{pk_dss}
Chirikjian, G.S., ``Parts Entropy and the Principal Kinematic Formula,'' {\it Proc. IEEE Conference on Automation Science and Engineering}, pp. 864--869, Washington D.C., August 23-26, 2008.


\bibitem{Zurek}
Zurek, W.H., 2018. Complexity, entropy and the physics of information. CRC Press.

\bibitem{Pincus}
Pincus, S.M., 1991. Approximate entropy as a measure of system complexity. Proceedings of the National Academy of Sciences, 88(6), pp.2297-2301.

\bibitem{Teix}
Teixeira, A., Matos, A., Souto, A. and Antunes, L., 2011. Entropy measures vs. Kolmogorov complexity. Entropy, 13(3), pp.595-611.

\bibitem{Zur2}
Zurek, W.H., 1991. Algorithmic Information Content, Church—Turing Thesis, Physical Entropy, and Maxwell’s Demon. In Information Dynamics (pp. 245-259). Springer, Boston, MA.

\bibitem{Zurek2}
Zurek, W.H., ``Thermodynamic Cost of Computation, Algorithmic Complexity, and the Information Metric,'' {\it Nature} 341:119--124, 1989.

\bibitem{Benn}
Bennett, C.H., 1982. The Thermodynamics of Computation--a Review. International Journal of Theoretical Physics, 21(12).

\bibitem{Sz}
Szilard, L., 1964. On the decrease of entropy in a thermodynamic system by the intervention of intelligent beings. Behavioral Science, 9(4), pp.301-310.

\bibitem{Br}
Brillouin, L., {\it Science and Information Theory}, $2^{nd}$ ed., Academic Press, New York, 1962.

\bibitem{Land1}
Landauer, R., 1996. The physical nature of information. Physics letters A, 217(4-5), pp.188-193.

\bibitem{Land2}
Landauer, R., 1991. Information is physical. Physics Today, 44(5), pp.23-29.

\bibitem{st_leff1}
Leff, H.S., Rex, A.F., {\it Maxwell's Demon: Entropy, Information, Computing}, Princeton University Press, Princeton NJ, 1990.

\bibitem{st_leff2}
Leff, H.S., Rex, A.F., {\it Maxwell's Demon 2: Entropy, Classical and Quantum Information, Computing}, Institute of Physics Publishing, Bristol and Philadelphia, 2003.

\bibitem{Zurek1}
Zurek, W.H. ed., {\it Complexity, Entropy and the Physics of Information}, Sante Fe Institute Studies in the Sciences of Complexity, Vol. 8, Addison-Wesley,
Reading Mass, 1990.


%

\bibitem{menezes}
Menezes AA, Kabamba PT.,
``Optimal seeding of self-reproducing systems,'' {\it Artificial Life} 18:27–51,  2011.

\bibitem{sayama}
Sayama H., ``Construction theory, self-replication, and the halting problem,''
{\it Complexity}, 13:16–22, 2008.

\bibitem{pk_partent}
Sanderson, A.C., ``Parts Entropy Methods For Robotic Assembly System
Design" , {\it Proceedings of the 1984 IEEE International Conference
on Robotics and Automation (ICRA '84)}, Vol. 1, pp. 600 - 608, March
1984.

\bibitem{entropy2021}
Chirikjian, G.S., ``Rate of Entropy Production in Stochastic Mechanical Systems,''
{\it Entropy} 2022, 24, 19. https://doi.org/10.3390/e24010019


\bibitem{harmonic}
Chirikjian, G.S., Kyatkin, A.B., {\it Harmonic Analysis for Engineers and Applied Scientists},
Dover, Mineola, NY, 2016.


\bibitem{Wuelker}
Wuelker, C., Ruan, S. and Chirikjian, G.S.,
``Quantizing Euclidean motions via double-coset decomposition.'' {\it Research}, 2019.

\bibitem{gc-geomech}
Chirikjian, G.S., 2010. Information-theoretic inequalities on unimodular Lie groups. Journal of geometric mechanics, 2(2), p.119.


\bibitem{stochastic}
Chirikjian, G.S.,  {\it Stochastic Models, Information Theory, and Lie Groups: Volumes I + II},
Birkh\"{a}user, Boston, 2009/2012.





\bibitem{st_adler}
Adler, R.L., Konheim, A.G., McAndrew, M.H., ``Topological Entropy,'' {\it Transactions of the American Mathematical Society},  114(2):309--319, 1965.


\bibitem{st_billingsley}
Billingsley, P., {\it Ergodic Theory and Information}, Robert E. Krieger Publishing Co., Huntington, New York, 1978.


\bibitem{Bunimovich}
Bunimovich, L.A., Dani, S.G., Dobrushin, R.L., Jakobson, M.V., Kornfeld, I.P., Maslova, N.B., Pesin, Ya. B., Sinai, Ya. G., Smillie, J., Sukhov, Yu. M.,
Vershik, A.M., {\it Dynamical Systems, Ergodic Theory, and Applications}, $2^{nd}$ ed., Encyclopaedia of Mathematical Sciences, Vol. 100, Springer-Verlag,
Berlin, 2000.


\bibitem{st_halmos}
Halmos, P.R., {\it Lectures on Ergodic Theory}, The Mathematical Society of Japan, Tokyo, 1956.


\bibitem{erg_Kleinbock}
Kleinbock, D., Shah, N., Starkov, A., ``Dynamics of subgroup actions on homogeneous spaces of Lie groups and applications to number theory,'' in {\it Handbook of Dynamical Systems, Vol. 1A} (B. Hasselblatt, A. Katok, eds.), Chapter 11 (pp. 813--930) Elsevier, 2002.


\bibitem{st_manebook}
Ma\~{n}\'{e}, R., {\it Ergodic Theory and Differentiable Dynamics},  (translated from the Portuguese by Silvio Levy), Springer-Verlag, Berlin ; New York, 1987.

\bibitem{erg_Margulis}
Margulis, G.A., Nevo, A., Stein, E.M.,
``Analogs of Wiener's Ergodic Theorems for Semisimple Groups II,''
{\it Duke Math, J.} 103(2):233--259, 2000.

\bibitem{st_Mooreerg}
Moore, C.C., ``Ergodicity of flows on homogeneous spaces,'' {\it Amer. J. Math.} 88: 154--178, 1966.

\bibitem{st_Moser}
Moser, J., Phillips, E., Varadhan, S., {\it Ergodic Theory (A Seminar)}, Courant Institute, NYU, New York, 1975.

\bibitem{st_parry}
Parry, W., {\it Topics in Ergodic Theory}, Cambridge University Press, Cambridge, England, 1981.

\bibitem{st_petersen}
Petersen, K., {\it Ergodic Theory}, Cambridge University Press, Cambridge, England, 1983.

\bibitem{st_rokhlin}
Rokhlin, V.A., ``Lectures on the entropy theory of transformations with invariant measure,''
{\it Usp. Mat. Nauk.} 22: 3--56, 1967; {\it Russian Math. Surveys} 22:1--52, 1967.

\bibitem{erg_Ruelle}
Ruelle, D., ``Ergodic Theory of Differentiable Dynamical Systems,'' {\it Publ. IHES} 50:275--306, 1979.

\bibitem{st_sinaient}
Sinai, Ya. G., ``On the Notion of Entropy of Dynamical Systems,''
{\it Dokl. Acad. Sci. USSR}  124(4(: 768--771, 1959.



\bibitem{st_Templeman}
Templeman, A., {\it Ergodic Theorems for Group Actions: Informational and Thermodynamical Aspects},
Kluwer Academic Publishers, Dordrecht, The Netherlands, 1992.

\bibitem{erg_Ulam}
Ulam, S.M., von Neumann, J., ``Random Ergodic Theorems,'' {\it Bull. Amer. Math. Soc.}, 51(9):660--, 1947



\bibitem{st_walters}
Walters, P., {\it An Introduction to Ergodic Theory}, Springer-Verlag, New York, 1982.





\bibitem{pk_boothroyd}
Boothroyd, G., Redford, A.H.,  {\it Mechanized Assembly: Fundamentals of
parts feeding, orientation, and mechanized assembly}, McGraw-Hill,
London, 1968.

\bibitem{pk_booth1}
Boothroyd G., {\it Assembly Automation and Product Design}, $2^{nd}$ ed., CRC Press, Boca Raton, FL, 2005

\bibitem{pk_Homem}
de Mello, L.S.H., Lee, S., eds., {\it Computer-Aided Mechanical Assembly Planning},
Kluwer, Boston, 1991.

\bibitem{pk_erdmann}
Erdmann, M.A., Mason, M.T., ``An Exploration of Sensorless
Manipulation", I.E.E.E. {\it Journal of Robotics and Automation},
Vol. 4, No. 4, pp. 369--379, August 1988.

\bibitem{pk_liu}
Liu, Y., Popplestone, R.J., ``Symmetry Groups in Analysis of
Assembly Kinematics" , \emph{ICRA 1991}, pp. 572 -- 577, Sacramento,
CA, April 1991.

\bibitem{pk_Whitney}
Whitney, D.E., {\it Mechanical Assemblies}, Oxford University Press, New York, 2004.


\bibitem{pk_blaschke2}
Blaschke, W., {\it Vorlesungen \"{u}ber Integralgeometrie},
Berlin, Deutscher Verlag der Wissenschaften, 1955.


\bibitem{pk_chern}
Chern, S.-S.,
``On the Kinematic Formula in the Euclidean Space of $N$ Dimensions,'' {\it
American Journal of Mathematics}, Vol. 74, No. 1 (Jan., 1952), pp. 227--236





%
%

%


\bibitem{pk_Klain}
Klain, D.A., Rota, G.-C., {\it Introduction to Geometric Probability}, Cambridge University Press, 1997.



\bibitem{pk_poin}
Poincar\'{e}, H., {\it Calcul de Probabilit\'{e}s}, $2^{nd}$ ed., Paris 1912.
(reprinted by BiblioLife in 2009)

\bibitem{pk_santalo}
Santal\'{o}, L., {\it Integral Geometry and Geometric Probability},
Cambridge University Press, 2004 (originally published in 1976 by Addison-Wesley)



\bibitem{SchneiderWeilbook}
Schneider, R., Weil, W.,
{\it Stochastic and Integral Geometry}, Springer-Verlag, Berlin, 2008.


%
%
%
%

\bibitem{pk_Zhang}
Zhang, G., ``A sufficient condition for one convex body containing another,'' {\it Chinese Ann. of
Math.} 9B(4) (1988), 447--451.

\bibitem{pk_zhouj92}
Zhou, J., ``A Kinematic Formula and Analogues of Hadwiger's Theorem in Space,''
{\it Contemporary Mathematics} 140, pp. 159-167, American Mathematical Society, 1992.

\bibitem{pk_zhouj95}
Zhou, J., ``When Can One Domain Enclose Another in $\IR^3$?,''
{\it J. Austral. Math. Soc. (Series A)} 59:266--272, 1995

\bibitem{pk_zhouj98}
Zhou, J., ``Sufficient Conditions for One Domain to
Contain Another in a Space of Constant Curvature,''
{\it Proc. AMS}, 126(9):2797--2803, 1998.

\bibitem{pk_Karnik}
Karnik, M., Gupta, S.K., Magrab, E.B.,
``Geometric algorithms for containment analysis of rotational parts,'' {\it Computer Aided Design},
37(2):213-230, 2005.

\bibitem{kin-contain}
Ruan, S., Ding, J., Ma, Q. and Chirikjian, G.S., 2019. The kinematics of containment for N-dimensional ellipsoids. Journal of Mechanisms and Robotics, 11(4).

\end{thebibliography}
\end{document}